\newtheorem{theorem}{Theorem}
\newtheorem{remark}[theorem]{Remark}
\title{Randomized double and triple Kaczmarz for solving extended normal equations}
\author{Kui Du\thanks{School of Mathematical Sciences and Fujian Provincial Key Laboratory of Mathematical Modelling and High Performance Scientific Computing, Xiamen University, Xiamen 361005, China (kuidu@xmu.edu.cn).},\quad  Xiao-Hui Sun\thanks{School of Mathematical Sciences, Xiamen University, Xiamen 361005, China ({sunxh@stu.xmu.edu.cn}).}} 
\date{}                                           
\begin{document}
\maketitle

\begin{abstract} The randomized Kaczmarz algorithm has received considerable attention recently because of its simplicity, speed, and the ability to approximately solve large-scale linear systems of equations. In this paper we propose randomized double and triple Kaczmarz algorithms to solve extended normal equations of the form $\bf A^\top Ax=A^\top b-c$. The proposed algorithms avoid forming $\bf A^\top A$ explicitly and work for {\it arbitrary} $\mbf A\in\mbbr^{m\times n}$  (full rank or rank deficient, $m\geq n$ or $m<n$). {\it Tight} upper bounds showing exponential convergence in the mean square sense of the proposed algorithms are presented and numerical experiments are given to illustrate the theoretical results.  
\vspace{.5mm} 

{\bf Keywords}. Extended normal equations,  Randomized Kaczmarz, Exponential convergence, Tight upper bounds 

{\bf AMS subject classifications}: 65F10, 65F20, 15A06

\end{abstract}

\section{Introduction}

We consider the following extended normal equations \beq\label{pne} \mbf A^\top\mbf A\mbf x=\mbf A^\top \mbf b-\mbf c\eeq with {\it arbitrary} $\mbf A\in\mbbr^{m\times n}$ (full rank or rank deficient, $m\geq n$ or $m<n$), $\mbf b\in\mbbr^m$, and $\mbf c\in\mbbr^n$. The linear system (\ref{pne}) arises in some applications, such as multilevel Levenberg-Marquardt methods for training artificial neural networks \cite{calandra2019appro} or Fletcher's exact penalty function approach \cite{fletcher1972class}. Because of the existence of $\mbf c$, standard methods for least squares problems can not be used directly. Conjugate gradient-type methods  based on {\it full} matrix-vector multiplications for solving (\ref{pne}) with full column rank $\mbf A$ have been proposed recently in \cite{calandra2020itera}. However, these methods are {\it not feasible} when full matrix-vector multiplications are not available or ``expensive'' to obtain (e.g., the data matrix $\mbf A$ is dynamically growing or so large that it does not fit in computer memory). 

In recent years, randomized iterative algorithms for solving large-scale linear systems or linear least squares problems have been greatly developed due to low memory footprints (these methods do not need to load the entire coefficient matrix into memory, and each iteration only requires a sample of rows and/or columns) and good numerical performance, such as the randomized Kaczmarz (RK) algorithm \cite{strohmer2009rando}, the  randomized coordinate descent algorithm \cite{leventhal2010rando}, and their extensions, e.g., \cite{zouzias2013rando,ma2015conve,needell2015rando,bai2018greed,bai2018relax,bai2019parti,ma2018itera,necoara2019faste,zhang2019new,du2020rando,moorman2020rando,niu2020greed,wu2020proje,zhang2020relax}. In this paper, we propose two variants of the randomized Kaczmarz algorithm \cite{strohmer2009rando} to solve the extended normal equations (\ref{pne}). More specifically, we propose a randomized double Kaczmarz (RDK) algorithm for a solution of the linear system (\ref{pne}) if it is consistent ($\mbf c\in\ran(\mbf A^\top)$) and a randomized triple Kaczmarz (RTK) algorithm for a least squares solution of the linear system (\ref{pne}) if it is inconsistent ($\mbf c\notin\ran(\mbf A^\top)$). We make no assumptions about the dimensions or rank of $\mbf A$. We present {\it tight} upper bounds for the exponential convergence in the mean square sense of the proposed algorithms. 

The organization of this paper is as follows. In the rest of this section, we give  notation and preliminary. In Section 2, we review the RK algorithm. In Section 3 we describe the RDK algorithm and the RTK algorithm, and we also establish their convergence theory. In Section 4 we report the numerical results to illustrate the theoretical results. Finally, we present brief concluding remarks in Section 5.

{\it Notation and preliminary}. For any random variable $\bm\xi$, we use $\mbbe\bem\bm\xi\eem$ to denote the expectation of $\bm\xi$. For an integer $m\geq 1$, let $[m]:=\{1,2,3,\ldots,m\}$. For any vector $\mbf b\in\mbbr^m$, we use $\mbf b_i$, $\bf b^\top$ and $\|\mbf b\|_2$ to denote the $i$th entry, the transpose and the Euclidean norm of $\mbf b$, respectively. We use $\mbf I$ to denote the identity matrix whose order is clear from the context. For any matrix $\mbf A\in\mbbr^{m\times n}$, we use $\mbf A_{i,:}$, $\mbf A_{:,j}$ $\mbf A^\top$, $\mbf A^\dag$, $\|\mbf A\|_2$, $\|\mbf A\|_\rmf$, $\ran(\mbf A)$, $\rank(\mbf A)$, $\sigma_{\rm max}(\mbf A)$ and $\sigma_{\rm min}(\mbf A)$ to denote the $i$th row, the $j$th column, the transpose, the Moore-Penrose pseudoinverse, the 2-norm, the Frobenius norm, the column space, the rank, the maximum and the minimum nonzero singular values of $\mbf A$, respectively. All the convergence results depend on the positive number $\rho$ defined as $$\rho:=1-\frac{\sigma_{\rm min}^2(\mbf A)}{\|\mbf A\|_\rmf^2}.$$ For any nonzero matrix $\mbf A$ and any $\mbf u\in\ran(\mbf A^\top)$, it holds \beq\label{lem}\mbf u^\top\l(\mbf I-\frac{\mbf A^\top \mbf A}{\|\mbf A\|_\rmf^2}\r)\mbf u\leq\rho \|\mbf u\|_2^2.\eeq 

\section{Randomized Kaczmarz} 
 In each iteration, the RK algorithm orthogonally projects the current estimate vector onto the affine hyperplane defined by a randomly chosen row of $\bf Ax=b$. See Algorithm 1 for details. Theorem \ref{rk1} shows that the sequence $\{\mbf x^k\}_{k=0}^\infty$ in the RK algorithm with arbitrary initial vector $\mbf x^0\in\mbbr^n$ for a consistent linear system $\bf Ax=b$ converges to $\mbf x_\star^0=(\mbf I-\mbf A^\dag\mbf A)\mbf x^0+\mbf A^\dag\mbf b$, which is the orthogonal projection of $\mbf x^0$ onto the solution set $\{\mbf x\in\mbbr^n\ |\ \bf Ax = b\}$. We emphasize that we make no assumptions about the dimensions or rank of $\mbf A$. The proof of Theorem \ref{rk1} can be found in, e.g., \cite{du2019tight,zouzias2013rando,necoara2019faste}. For completeness and clarity, we provide a proof. 
\begin{center}
\begin{tabular*}{160mm}{l}
\toprule {\bf Algorithm 1:} RK for $\bf Ax=b$\\ 
\hline \noalign{\smallskip}
\qquad Initialize $\mbf x^0\in\mbbr^n$\\
\qquad {\bf for} $k=1,2,\ldots,$ {\bf do}\\
\qquad \qquad  Pick $i\in[m]$ with probability ${\|\mbf A_{i,:}\|^2_2}/{\|\mbf A\|_\rmf^2}$\\
\qquad \qquad  Set $\dsp\mbf x^k=\mbf x^{k-1}-\frac{\mbf A_{i,:}\mbf x^{k-1}-\mbf b_i}{\|\mbf A_{i,:}\|_2^2} (\mbf A_{i,:})^\top$\\
\bottomrule
\end{tabular*}
\end{center}

\begin{theorem}\label{rk1} Suppose that $\mbf b\in\ran(\mbf A)$ $($i.e., $\bf Ax=b$ is consistent$)$. The sequence $\{\mbf x^k\}_{k=0}^\infty$ in the {\rm RK} algorithm with arbitrary $\mbf x^0\in\mbbr^n$ satisfies \beq\label{rk}\mbbe\bem\|\mbf x^k-\mbf x_\star^0\|_2^2\eem\leq\rho^k\|\mbf x^0-\mbf x_\star^0\|_2^2,\eeq where $\mbf x_\star^0=(\mbf I-\mbf A^\dag\mbf A)\mbf x^0+\mbf A^\dag\mbf b$.
\end{theorem}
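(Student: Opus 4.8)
The plan is to track the error vector $\mbf e^k:=\mbf x^k-\mbf x_\star^0$ and show that it contracts by the factor $\rho$ in expectation at each step. First I would record two structural facts about $\mbf x_\star^0$. Since $\mbf A^\dag\mbf A$ is the orthogonal projector onto $\ran(\mbf A^\top)$, the definition of $\mbf x_\star^0$ gives $\mbf e^0=\mbf x^0-\mbf x_\star^0=\mbf A^\dag(\mbf A\mbf x^0-\mbf b)\in\ran(\mbf A^\dag)=\ran(\mbf A^\top)$; and because $\mbf b\in\ran(\mbf A)$ one checks $\mbf A\mbf x_\star^0=\mbf A(\mbf I-\mbf A^\dag\mbf A)\mbf x^0+\mbf A\mbf A^\dag\mbf b=\mbf b$, so $\mbf x_\star^0$ solves the system and $\mbf A_{i,:}\mbf x_\star^0=\mbf b_i$ for every $i$.

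Next I would establish the invariance $\mbf e^k\in\ran(\mbf A^\top)$ for all $k$ by induction: each update adds a multiple of $(\mbf A_{i,:})^\top\in\ran(\mbf A^\top)$ to $\mbf x^{k-1}$, so starting from $\mbf e^0\in\ran(\mbf A^\top)$ the error never leaves this subspace. This invariance is precisely what makes the stated constant $\rho$ correct, and I expect it to be the main point to get right: in the rank-deficient case any component of the error lying in $\nul(\mbf A)$ would be untouched by the projections and would prevent convergence to $\mbf x_\star^0$, but here such a component is absent from the outset, which is why the choice $\mbf x_\star^0=(\mbf I-\mbf A^\dag\mbf A)\mbf x^0+\mbf A^\dag\mbf b$ is the right target.

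With these in place the per-step analysis is a short computation. Using $\mbf b_i=\mbf A_{i,:}\mbf x_\star^0$, the update rewrites as $\mbf e^k=\l(\mbf I-(\mbf A_{i,:})^\top\mbf A_{i,:}/\|\mbf A_{i,:}\|_2^2\r)\mbf e^{k-1}$, an orthogonal projection, so the Pythagorean identity gives $\|\mbf e^k\|_2^2=\|\mbf e^{k-1}\|_2^2-(\mbf A_{i,:}\mbf e^{k-1})^2/\|\mbf A_{i,:}\|_2^2$. Taking the conditional expectation over the random index $i$ chosen with probability $\|\mbf A_{i,:}\|_2^2/\|\mbf A\|_\rmf^2$, given $\mbf x^{k-1}$, the row norms cancel and $\sum_{i=1}^m(\mbf A_{i,:}\mbf e^{k-1})^2=\|\mbf A\mbf e^{k-1}\|_2^2$, which yields
\[\mbbe\bem\|\mbf e^k\|_2^2\eem=(\mbf e^{k-1})^\top\l(\mbf I-\frac{\mbf A^\top\mbf A}{\|\mbf A\|_\rmf^2}\r)\mbf e^{k-1}\]
conditioned on $\mbf x^{k-1}$.

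Finally, since $\mbf e^{k-1}\in\ran(\mbf A^\top)$, inequality (\ref{lem}) bounds the right-hand side by $\rho\|\mbf e^{k-1}\|_2^2$, so the conditional expectation of $\|\mbf e^k\|_2^2$ is at most $\rho\|\mbf x^{k-1}-\mbf x_\star^0\|_2^2$. Taking full expectations and unrolling the recursion in $k$ by the tower property then gives (\ref{rk}). The only genuinely nontrivial ingredient is the subspace invariance established above; everything else is an orthogonal-projection identity combined with the averaging over rows.
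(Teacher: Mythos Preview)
Your proposal is correct and follows essentially the same route as the paper's proof: rewrite the update as an orthogonal projection of the error vector, establish by induction that $\mbf x^k-\mbf x_\star^0\in\ran(\mbf A^\top)$, average over the random row to obtain $(\mbf e^{k-1})^\top(\mbf I-\mbf A^\top\mbf A/\|\mbf A\|_\rmf^2)\mbf e^{k-1}$, apply inequality~(\ref{lem}), and unroll via the tower property. The only cosmetic difference is that you invoke the Pythagorean identity where the paper uses idempotence of the rank-one projector, and you spell out the verification that $\mbf A\mbf x_\star^0=\mbf b$, which the paper simply asserts.
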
  
\begin{proof} By $\mbf A\mbf x_\star^0=\mbf b$, we have \begin{align} \mbf x^k-\mbf x_\star^0 &=\mbf x^{k-1}-\mbf x_\star^0-\frac{\mbf A_{i,:}\mbf x^{k-1}-\mbf b_i}{\|\mbf A_{i,:}\|_2^2} (\mbf A_{i,:})^\top\nn \\ &=\mbf x^{k-1}-\mbf x_\star^0-\frac{\mbf A_{i,:}\mbf x^{k-1}-\mbf A_{i,:}\mbf x_\star^0}{\|\mbf A_{i,:}\|_2^2} (\mbf A_{i,:})^\top\nn \\ &= \l(\mbf I-\frac{(\mbf A_{i,:})^\top\mbf A_{i,:}}{\|\mbf A_{i,:}\|_2^2}\r)(\mbf x^{k-1}-\mbf x_\star^0)\label{up}.\end{align} 
It follows that $$\|\mbf x^k-\mbf x_\star^0\|_2^2=(\mbf x^{k-1}-\mbf x_\star^0)^\top\l(\mbf I-\frac{(\mbf A_{i,:})^\top\mbf A_{i,:}}{\|\mbf A_{i,:}\|_2^2}\r)(\mbf x^{k-1}-\mbf x_\star^0).$$
Taking condition expectation gives $$\mbbe\bem\|\mbf x^k-\mbf x_\star^0\|_2^2\ |\mbf x^{k-1}\eem=(\mbf x^{k-1}-\mbf x_\star^0)^\top\l(\mbf I-\frac{\mbf A^\top\mbf A}{\|\mbf A\|_\rmf^2}\r)(\mbf x^{k-1}-\mbf x_\star^0).$$ Noting that $\mbf x^0-\mbf x_\star^0=\mbf A^\dag(\mbf A\mbf x^0-\mbf b)\in\ran(\mbf A^\top)$ and $(\mbf A_{i,:})^\top\mbf A_{i,:}(\mbf x^{k-1}-\mbf x_\star^0)\in\ran(\mbf A^\top)$, by (\ref{up}), we can show that $\mbf x^k-\mbf x_\star^0\in\ran(\mbf A^\top)$ by induction. Then by (\ref{lem}), we have $$\mbbe\bem\|\mbf x^k-\mbf x_\star^0\|_2^2\ |\mbf x^{k-1}\eem\leq\rho\|\mbf x^{k-1}-\mbf x_\star^0\|_2^2.$$ By the law of total expectation we have $$\mbbe\bem\|\mbf x^k-\mbf x_\star^0\|_2^2\eem\leq\rho\mbbe\bem\|\mbf x^{k-1}-\mbf x_\star^0\|_2^2\eem\leq\cdots\leq\rho^k\|\mbf x^0-\mbf x_\star^0\|_2^2.$$ This completes the proof.
\end{proof}

\begin{remark} If $\sigma_{\rm max}(\mbf A)=\sigma_{\rm min}(\mbf A)$, then the inequality $(\ref{lem})$ becomes equality. This yields that all the inequalities in the proof of Theorem $\ref{rk1}$ become equalities. Therefore, the convergence bound in Theorem $\ref{rk1}$ is tight. 
\end{remark}


\section{Algorithms and main results} 

\subsection{The RDK algorithm for the case $\mbf c\in\ran(\mbf A^\top)$} The randomized extended Kaczmarz (REK) algorithm \cite{zouzias2013rando} solves $\bf A^\top A\mbf x=\mbf A^\top \mbf b$ via intertwining an iterate of RK on $\bf A^\top z=0$ with an iterate of RK on $\bf A x=b-z$. More precisely, the $k$th iterate of the REK algorithm, $\mbf x^k$, is the iterate of RK on ${\bf A x=b-z}^k$ from $\mbf x^{k-1}$, where $\mbf z^k$ is the $k$th iterate of RK on $\bf A^\top z=0$ with $\mbf z^0\in \mbf b+\ran(\mbf A)$. Inspired by the REK algorithm, we propose Algorithm 2 to solve the problem (\ref{pne}) for the case $\mbf c\in\ran(\mbf A^\top)$. We note that $\mbf z^k$ in Algorithm 2 is the $k$th iterate of RK on $\bf A^\top z=c$ with $\mbf z^0\in \mbf b+\ran(\mbf A)$, and $\mbf x^k$ is the iterate of RK on ${\bf A x=b-z}^k$ from $\mbf x^{k-1}$ with arbitrary $\mbf x^0\in\mbbr^n$. Since two RK iterates are used in each iteration of Algorithm 2, we call it a randomized double Kaczmarz (RDK) algorithm. By (\ref{rk}), we have \beq\label{rkz}\mbbe\bem\|\mbf z^k-\mbf z_\star^0\|_2^2\eem\leq\rho^k\|\mbf z^0-\mbf z_\star^0\|_2^2,\eeq where $$\mbf z_\star^0=(\mbf I-\mbf A\mbf A^\dag)\mbf z^0+(\mbf A^\top)^\dag\mbf c=(\mbf I-\mbf A\mbf A^\dag)\mbf b+(\mbf A^\top)^\dag\mbf c.$$ We show that the sequence $\{\mbf x^k\}_{k=0}^\infty$ in the RDK algorithm converges to a solution of the linear system  (\ref{pne}) in Theorem \ref{rk2}. We emphasize that we make no assumptions about the dimensions or rank of $\mbf A$.

\begin{center}
\begin{tabular*}{160mm}{l}
\toprule {\bf Algorithm 2:} RDK for $\bf A^\top Ax =A^\top b- c$ with $\mbf c\in\ran(\mbf A^\top)$ \\ 
\hline \noalign{\smallskip}
\qquad Initialize $\mbf z^0\in \mbf b+\ran(\mbf A)$ and $\mbf x^0\in\mbbr^n$\\
\qquad {\bf for} $k=1,2,\ldots,$ {\bf do}\\
\qquad \qquad  Pick $j\in[n]$ with probability ${\|\mbf A_{:,j}\|^2_2}/{\|\mbf A\|_\rmf^2}$\\
\qquad \qquad  Set $\dsp\mbf z^k=\mbf z^{k-1}-\frac{(\mbf A_{:,j})^\top\mbf z^{k-1}-\mbf c_j}{\|\mbf A_{:,j}\|_2^2} \mbf A_{:,j}$\\
\qquad \qquad  Pick $i\in[m]$ with probability ${\|\mbf A_{i,:}\|^2_2}/{\|\mbf A\|_\rmf^2}$\\
\qquad \qquad  Set $\dsp\mbf x^k=\mbf x^{k-1}-\frac{\mbf A_{i,:}\mbf x^{k-1}-\mbf b_i+\mbf z_i^k}{\|\mbf A_{i,:}\|_2^2} (\mbf A_{i,:})^\top$\\
\bottomrule
\end{tabular*}
\end{center}

\begin{theorem}\label{rk2} Suppose that $\mbf c\in\ran(\mbf A^\top)$ $($i.e., the linear system $(\ref{pne})$ is consistent$)$. The sequence $\{\mbf x^k\}_{k=0}^\infty$ in the {\rm RDK} algorithm with $\mbf z^0\in \mbf b+\ran(\mbf A)$ and arbitrary $\mbf x^0\in\mbbr^n$ satisfies $$\mbbe\bem\|\mbf x^k-\mbf x_\star^0\|_2^2\eem\leq\frac{k\rho^k}{\|\mbf A\|_\rmf^2}\|\mbf z^0-\mbf z_\star^0\|_2^2+\rho^k\|\mbf x^0-\mbf x_\star^0\|_2^2,$$ where $\mbf z_\star^0=(\mbf I-\mbf A\mbf A^\dag)\mbf b+(\mbf A^\top)^\dag\mbf c$, and $\mbf x_\star^0=(\mbf I-\mbf A^\dag\mbf A)\mbf x^0+\mbf A^\dag\mbf b-(\mbf A^\top\mbf A)^\dag\mbf c$ is a solution of {\rm(\ref{pne})}.
\end{theorem}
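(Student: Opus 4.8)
The plan is to track the error $\mbf e^k := \mbf x^k-\mbf x_\star^0$ of the outer iterate directly, treating the contribution of the inner Kaczmarz sweep on $\mbf A^\top\mbf z=\mbf c$ as a controlled forcing term. The first thing I would check is that the two fixed points are compatible, namely that $\mbf A\mbf x_\star^0=\mbf b-\mbf z_\star^0$ and that $\mbf x_\star^0$ indeed solves (\ref{pne}); both follow from the pseudoinverse identities $\mbf A\mbf A^\dag\mbf A=\mbf A$ and $\mbf A^\top\mbf A(\mbf A^\top\mbf A)^\dag\mbf c=\mbf c$ (the latter valid since $\mbf c\in\ran(\mbf A^\top)$), together with the observation that $(\mbf A^\top)^\dag\mbf c\in\ran(\mbf A)$ is annihilated by $\mbf I-\mbf A\mbf A^\dag$. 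Read componentwise, $\mbf A\mbf x_\star^0=\mbf b-\mbf z_\star^0$ gives $\mbf A_{i,:}\mbf x_\star^0=\mbf b_i-(\mbf z_\star^0)_i$, which is exactly what is needed to rewrite the residual appearing in the $\mbf x$ update.

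Substituting this relation into the $\mbf x$ update and writing $\mbf d^k:=\mbf z^k-\mbf z_\star^0$ and $\mbf P_i:=\mbf I-(\mbf A_{i,:})^\top\mbf A_{i,:}/\|\mbf A_{i,:}\|_2^2$, the update collapses to
$$\mbf e^k=\mbf P_i\mbf e^{k-1}-\frac{\mbf d_i^k}{\|\mbf A_{i,:}\|_2^2}(\mbf A_{i,:})^\top,$$
an inhomogeneous version of (\ref{up}) driven by the $\mbf z$-error. Since $\mbf P_i(\mbf A_{i,:})^\top=0$, the two terms on the right are orthogonal for every $i$, so the cross term vanishes deterministically and
$$\|\mbf e^k\|_2^2=\|\mbf P_i\mbf e^{k-1}\|_2^2+\frac{(\mbf d_i^k)^2}{\|\mbf A_{i,:}\|_2^2}.$$

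Next I would take expectation over the row index $i$ conditioned on $\mbf x^{k-1}$ and $\mbf z^k$, which is legitimate because in each sweep $\mbf z^k$ is formed before $i$ is drawn. Using $\mbbe_i[\mbf P_i]=\mbf I-\mbf A^\top\mbf A/\|\mbf A\|_\rmf^2$ and $\mbbe_i[(\mbf d_i^k)^2/\|\mbf A_{i,:}\|_2^2]=\|\mbf d^k\|_2^2/\|\mbf A\|_\rmf^2$, together with an induction showing $\mbf e^k\in\ran(\mbf A^\top)$ (base case $\mbf e^0=\mbf A^\dag\mbf A\mbf x^0-\mbf A^\dag\mbf b+(\mbf A^\top\mbf A)^\dag\mbf c\in\ran(\mbf A^\top)$, and the recursion preserves $\ran(\mbf A^\top)$), inequality (\ref{lem}) yields the conditional bound $\rho\|\mbf e^{k-1}\|_2^2+\|\mbf d^k\|_2^2/\|\mbf A\|_\rmf^2$. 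Taking total expectation and invoking (\ref{rkz}) for $\mbbe[\|\mbf d^k\|_2^2]\leq\rho^k\|\mbf z^0-\mbf z_\star^0\|_2^2$ produces the scalar recursion $a_k\leq\rho a_{k-1}+\rho^k\|\mbf z^0-\mbf z_\star^0\|_2^2/\|\mbf A\|_\rmf^2$ with $a_k:=\mbbe[\|\mbf e^k\|_2^2]$; dividing by $\rho^k$ telescopes this into the claimed bound. The main obstacle is conceptual rather than computational: one must correctly identify the compatible fixed point $\mbf z_\star^0$ so that the forcing term is exactly $\mbf d^k$, and respect the ordering of the two random draws so that conditioning on $\mbf z^k$ is valid. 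Once the perturbed recursion is in hand, the orthogonal splitting, the application of (\ref{lem}), and the telescoping are routine.
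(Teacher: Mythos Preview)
Your proof is correct and follows essentially the same route as the paper's: the paper introduces an auxiliary point $\wh{\mbf x}^k$ and splits $\mbf x^k-\mbf x_\star^0$ into $(\mbf x^k-\wh{\mbf x}^k)+(\wh{\mbf x}^k-\mbf x_\star^0)$, which is exactly your orthogonal decomposition $\mbf e^k=\mbf P_i\mbf e^{k-1}-\mbf d_i^k(\mbf A_{i,:})^\top/\|\mbf A_{i,:}\|_2^2$ with $\wh{\mbf x}^k-\mbf x_\star^0=\mbf P_i\mbf e^{k-1}$ and $\mbf x^k-\wh{\mbf x}^k=-\mbf d_i^k(\mbf A_{i,:})^\top/\|\mbf A_{i,:}\|_2^2$. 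The conditioning, the use of (\ref{lem}) on $\ran(\mbf A^\top)$, the appeal to (\ref{rkz}), and the telescoping are identical; your preliminary check that $\mbf A\mbf x_\star^0=\mbf b-\mbf z_\star^0$ is the same computation the paper carries out implicitly when deriving its equations (\ref{eq1})--(\ref{eq2}).
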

\begin{proof} Let \beq\label{xhat}\wh{\mbf x}^k={\bf x}^{k-1}-\frac{\mbf A_{i,:}({\bf x}^{k-1}-\bf A^\dag b+(A^\top A)^\dag c)}{\|\mbf A_{i,:}\|_2^2}(\mbf A_{i,:})^\top.\eeq
By ${\bf A(A^\top A)^\dag=(\mbf A^\top)^\dag}$, we have \begin{align}{\mbf x}^k-\wh{\mbf x}^k & =\frac{\mbf b_i-\mbf A_{i,:}\mbf A^\dag\mbf b+\mbf A_{i,:} (\mbf A^\top \mbf A)^\dag \mbf c-\mbf z_i^k}{\|\mbf A_{i,:}\|_2^2}(\mbf A_{i,:})^\top\nn \\ & = \frac{\mbf I_{i,:}((\mbf I-\mbf A\mbf A^\dag)\mbf b+(\mbf A^\top)^\dag \mbf c-\mbf z^k)}{\|\mbf A_{i,:}\|_2^2}(\mbf A_{i,:})^\top\nn \\ &=\frac{\mbf I_{i,:}(\mbf z_\star^0-\mbf z^k)}{\|\mbf A_{i,:}\|_2^2}(\mbf A_{i,:})^\top \label{eq1} \end{align}
 and by $\mbf A(\mbf I-\mbf A^\dag\mbf A)\mbf x^0=\mbf 0$,  we have \begin{align}\wh{\mbf x}^k-\mbf x_\star^0 &= {\bf x}^{k-1}-\mbf x_\star^0-\frac{\mbf A_{i,:}({\bf x}^{k-1}-(\mbf I-\mbf A^\dag\mbf A)\mbf x^0-\bf A^\dag b+(A^\top A)^\dag c)}{\|\mbf A_{i,:}\|_2^2}(\mbf A_{i,:})^\top \nn \\ &= {\bf x}^{k-1}-\mbf x_\star^0-\frac{\mbf A_{i,:}({\bf x}^{k-1}-\mbf x_\star^0)}{\|\mbf A_{i,:}\|_2^2}(\mbf A_{i,:})^\top\nn\\ & = \l(\mbf I-\frac{(\mbf A_{i,:})^\top\mbf A_{i,:}}{\|\mbf A_{i,:}\|_2^2}\r)({\bf x}^{k-1}-\mbf x_\star^0).\label{eq2}\end{align} 
 By the orthogonality $(\wh{\mbf x}^k-{\mbf x_\star^0})^\top({\mbf x}^k-\wh{\mbf x}^k)=0$ (which is obvious from (\ref{eq1}) and (\ref{eq2})), we have \beq\label{ksum}\|{\mbf x}^k-{\mbf x_\star^0}\|_2^2=\|{\mbf x}^k-\wh{\mbf x}^k\|_2^2+\|\wh{\mbf x}^k-{\mbf x_\star^0}\|_2^2.\eeq 
 Let $\mbbe_{k-1}\bem\cdot\eem$ denote the conditional expectation given the first $k-1$ iterations of RDK. 
 Let $\mbbe_{k-1}^i\bem\cdot\eem$ denote the expectation with respect to the $k$th row chosen and  $\mbbe_{k-1}^j\bem\cdot\eem$ denote the expectation with respect to the $k$th column chosen. Then by the law of total expectation we have $\mbbe_{k-1}\bem\cdot\eem=\mbbe_{k-1}^j\bem\mbbe_{k-1}^i\bem\cdot\eem\eem$.
 It follows from \begin{align*}\mbbe_{k-1}\bem\|{\mbf x}^k-\wh{\mbf x}^k\|_2^2\eem 
 &= \mbbe_{k-1}\bem\dsp\frac{(\mbf I_{i,:}(\mbf z_\star^0-\mbf z^k))^2}{\|\mbf A_{i,:}\|_2^2}\eem =\mbbe_{k-1}^j\bem\mbbe_{k-1}^i\bem\dsp\frac{(\mbf I_{i,:}(\mbf z_\star^0-\mbf z^k))^2}{\|\mbf A_{i,:}\|_2^2}\eem\eem\\
 &=\mbbe_{k-1}^j\bem\dsp\frac{\|\mbf z^k-\mbf z_\star^0\|_2^2}{\|\mbf A\|_\rmf^2}\eem =\frac{1}{\|\mbf A\|_\rmf^2}\mbbe_{k-1}\bem\|\mbf z^k-\mbf z_\star^0\|_2^2\eem\end{align*} that 
 \beq\label{ksum1}\mbbe\bem\|{\mbf x}^k-\wh{\mbf x}^k\|_2^2\eem = \frac{1}{\|\mbf A\|_\rmf^2}\mbbe\bem\|\mbf z^k-\mbf z_\star^0\|_2^2\eem
    \leq \frac{\rho^k}{\|\mbf A\|_\rmf^2}\|\mbf z^0-\mbf z_\star^0\|_2^2. \quad (\mbox{by (\ref{rkz})})\eeq 
 By $\mbf x^0-\mbf x_\star^0\in\ran(\mbf A^\top)$, it is easy to show that $\mbf x^{k-1}-\mbf x_\star^0\in\ran(\mbf A^\top)$ by induction. It follows from \begin{align*}\mbbe_{k-1}\bem\|\wh{\mbf x}^k-\mbf x_\star^0\|_2^2\eem &=\mbbe_{k-1}\bem(\wh{\mbf x}^k-\mbf x_\star^0)^\top(\wh{\mbf x}^k-\mbf x_\star^0)\eem\\&=\mbbe_{k-1}\bem\dsp({\bf x}^{k-1}-\mbf x_\star^0)^\top\l(\mbf I-\frac{(\mbf A_{i,:})^\top\mbf A_{i,:}}{\|\mbf A_{i,:}\|_2^2}\r)^2({\bf x}^{k-1}-\mbf x_\star^0)\eem\\&=\mbbe_{k-1}\bem\dsp({\bf x}^{k-1}-\mbf x_\star^0)^\top\l(\mbf I-\frac{(\mbf A_{i,:})^\top\mbf A_{i,:}}{\|\mbf A_{i,:}\|_2^2}\r)({\bf x}^{k-1}-\mbf x_\star^0)\eem\\&=\dsp({\bf x}^{k-1}-\mbf x_\star^0)^\top\l(\mbf I-\frac{\bf A^\top A}{\|{\bf A}\|_\rmf^2}\r)({\bf x}^{k-1}-\mbf x_\star^0)\\&\leq\rho\|{\bf x}^{k-1}-\mbf x_\star^0\|_2^2\quad (\mbox{by }(\ref{lem}))\end{align*} that \beq\label{ksum2}\mbbe\bem\|\wh{\mbf x}^k-\mbf x_\star^0\|_2^2\eem\leq\rho\mbbe\bem\|{\bf x}^{k-1}-\mbf x_\star^0\|_2^2\eem.\eeq 
Combining (\ref{ksum}), (\ref{ksum1}), and (\ref{ksum2}) yields \begin{align*}\mbbe\bem\|{\mbf x}^k-\mbf x_\star^0\|_2^2\eem &=\mbbe\bem\|{\mbf x}^k-\wh{\mbf x}^k\|_2^2\eem+\mbbe\bem\|\wh{\mbf x}^k-\mbf x_\star^0\|_2^2\eem\\&\leq \frac{\rho^k}{\|\mbf A\|_\rmf^2}\|\mbf z^0-\mbf z_\star^0\|_2^2+\rho\mbbe\bem\|{\bf x}^{k-1}-\mbf x_\star^0\|_2^2\eem\\&\leq\frac{2\rho^k}{\|\mbf A\|_\rmf^2}\|\mbf z^0-\mbf z_\star^0\|_2^2+\rho^2\mbbe\bem\|{\bf x}^{k-2}-\mbf x_\star^0\|_2^2\eem\\&\leq\cdots\\ &\leq\frac{k\rho^k}{\|\mbf A\|_\rmf^2}\|\mbf z^0-\mbf z_\star^0\|_2^2+\rho^k\|{\bf x}^0-\mbf x_\star^0\|_2^2.
\end{align*} It is trivial to verify that $\mbf x_\star^0$ is a solution of (\ref{pne}). This completes the proof.
\end{proof}
\begin{remark} If $\sigma_{\rm max}(\mbf A)=\sigma_{\rm min}(\mbf A)$, then the inequalities $(\ref{lem})$ and $(\ref{rk})$ become equalities. This yields that all the inequalities in the proof of Theorem $\ref{rk2}$ become equalities. Therefore, the convergence bound in Theorem $\ref{rk2}$ is tight. 
\end{remark}
 
\subsection{The RTK algorithm for the case $\mbf c\notin\ran(\mbf A^\top)$}

Given $\mbf U\in\mbbr^{m\times k}$, $\mbf V\in\mbbr^{k\times n}$ and $\mbf y\in\mbbr^m$, the REK-RK algorithm \cite[Algorithm 2]{ma2018itera} solves the factorized linear system $\bf UVx=y$ for the case $\mbf y\notin\ran(\mbf U)$ via intertwining an iterate of REK for solving $\bf U^\top U z = U^\top y$ with an iterate of RK on $\bf Vx =z$. Inspired by the REK-RK algorithm, we propose Algorithm 3 for the linear system (\ref{pne}) with $\mbf c \notin \ran(\mbf A^\top)$. We note that $\mbf y^k$ in Algorithm 3 is the $k$th iterate of RK on $\bf A y=0$ with $\mbf y^0\in \mbf c+\ran(\mbf A^\top)$, $\mbf z^k$ is the iterate of RK on ${\bf A^\top z=c-y}^k$ from $\mbf z^{k-1}$ with $\mbf z^0\in \mbf b+\ran(\mbf A)$, and $\mbf x^k$ is the iterate of RK on ${\bf Ax=b-z}^k$ from $\mbf x^{k-1}$ with arbitrary $\mbf x^0\in\mbbr^n$. Since three RK iterates are used in each iteration of Algorithm 3, we call it a randomized triple Kaczmarz (RTK) algorithm. Actually, $\mbf y^k$ and $\mbf z^k$ of RTK are exactly the iterates of  RDK applied for the system $\bf AA^\top z=Ac$ (or $\bf Ay=0$ and $\bf A^\top z=c-y$).  By  Theorem \ref{rk2}, we have \beq\label{thm1}\mbbe\bem\|\mbf z^k-\mbf z_\star^0\|_2^2\eem\leq\frac{k\rho^k}{\|\mbf A\|_\rmf^2}\|\mbf y^0-\mbf y_\star^0\|_2^2+\rho^k\|\mbf z^0-\mbf z_\star^0\|_2^2,\eeq where $\mbf y_\star^0=(\mbf I-\mbf A^\dag\mbf A)\mbf c$ and $\mbf z_\star^0=(\mbf I-\mbf A\mbf A^\dag)\mbf b+(\mbf A^\top)^\dag\mbf c$. We show that the sequence $\{\mbf x^k\}_{k=0}^\infty$ in the RTK algorithm converges to a least squares solution of (\ref{pne}) in Theorem \ref{rk3}. We emphasize that we make no assumptions about the dimensions or rank of $\mbf A$.

\begin{center}
\begin{tabular*}{160mm}{l}
\toprule {\bf Algorithm 3:} RTK for $\bf A^\top Ax =A^\top b- c$ with $\mbf c\notin\ran(\mbf A^\top)$ \\ 
\hline \noalign{\smallskip}
\qquad Initialize $\mbf y^0\in\mbf c+\ran(\mbf A^\top)$, $\mbf z^0\in \mbf b+\ran(\mbf A)$, and $\mbf x^0\in\mbbr^n$\\
\qquad {\bf for} $k=1,2,\ldots,$ {\bf do}\\
\qquad \qquad  Pick $l\in[m]$ with probability ${\|\mbf A_{l,:}\|^2_2}/{\|\mbf A\|_\rmf^2}$\\
\qquad \qquad  Set $\dsp\mbf y^k=\mbf y^{k-1}-\frac{\mbf A_{l,:}\mbf y^{k-1}}{\|\mbf A_{l,:}\|_2^2} (\mbf A_{l,:})^\top$\\
\qquad \qquad  Pick $j\in[n]$ with probability ${\|\mbf A_{:,j}\|^2_2}/{\|\mbf A\|_\rmf^2}$\\
\qquad \qquad  Set $\dsp\mbf z^k=\mbf z^{k-1}-\frac{(\mbf A_{:,j})^\top\mbf z^{k-1}-\mbf c_j+\mbf y_j^k}{\|\mbf A_{:,j}\|_2^2} \mbf A_{:,j}$\\
\qquad \qquad  Pick $i\in[m]$ with probability ${\|\mbf A_{i,:}\|^2_2}/{\|\mbf A\|_\rmf^2}$\\
\qquad \qquad  Set $\dsp\mbf x^k=\mbf x^{k-1}-\frac{\mbf A_{i,:}\mbf x^{k-1}-\mbf b_i+\mbf z_i^k}{\|\mbf A_{i,:}\|_2^2} (\mbf A_{i,:})^\top$\\
\bottomrule
\end{tabular*}
\end{center}

\begin{theorem}\label{rk3} Suppose that $\mbf c\notin\ran(\mbf A^\top)$ $($i.e., the linear system $(\ref{pne})$ is inconsistent$)$. The sequence $\{\mbf x^k\}_{k=0}^\infty$ in the {\rm RTK} algorithm with $\mbf y^0\in\mbf c+\ran(\mbf A^\top)$, $\mbf z^0\in \mbf b+\ran(\mbf A)$, and arbitrary $\mbf x^0\in\mbbr^n$ satisfies $$\mbbe\bem\|\mbf x^k-\mbf x_\star^0\|_2^2\eem\leq\frac{k(k+1)\rho^k}{2\|\mbf A\|_\rmf^4}\|\mbf y^0-\mbf y_\star^0\|+\frac{k\rho^k}{\|\mbf A\|_\rmf^2}\|\mbf z^0-\mbf z_\star^0\|_2^2+\rho^k\|\mbf x^0-\mbf x_\star^0\|_2^2,$$ where $\mbf y_\star^0=(\mbf I-\mbf A^\dag\mbf A)\mbf c$, $\mbf z_\star^0=(\mbf I-\mbf A\mbf A^\dag)\mbf b+(\mbf A^\top)^\dag\mbf c$, and $ \mbf x_\star^0=(\mbf I-\mbf A^\dag\mbf A)\mbf x^0+\mbf A^\dag\mbf b-(\mbf A^\top\mbf A)^\dag\mbf c$  is a least squares solution of {\rm(\ref{pne})}.
\end{theorem}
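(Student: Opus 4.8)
The plan is to follow the proof of Theorem~\ref{rk2} almost verbatim, since the $\mbf x$-update in RTK is identical to the one in RDK; the only difference is that the driving error $\mbbe\bem\|\mbf z^k-\mbf z_\star^0\|_2^2\eem$ is no longer controlled by a purely geometric bound as in (\ref{rkz}), but by the larger bound (\ref{thm1}), which already carries a $k\rho^k$ factor. First I would introduce the same auxiliary ``noiseless'' iterate $\wh{\mbf x}^k$ defined by (\ref{xhat}), namely the RK step on $\mbf A\mbf x=\mbf b$ that would be taken if $\mbf z^k$ were replaced by its limit $\mbf z_\star^0$.

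With this $\wh{\mbf x}^k$ the identities (\ref{eq1}) and (\ref{eq2}) carry over unchanged, since they use only $\mbf A(\mbf A^\top\mbf A)^\dag=(\mbf A^\top)^\dag$ and $\mbf A(\mbf I-\mbf A^\dag\mbf A)\mbf x^0=\mbf 0$, both still valid. Hence the orthogonality $(\wh{\mbf x}^k-\mbf x_\star^0)^\top(\mbf x^k-\wh{\mbf x}^k)=0$ and the Pythagorean splitting (\ref{ksum}) hold. Taking the conditional expectation over the row index $i$ of the $\mbf x$-update and then applying the tower property over the indices $l,j$ used in the $\mbf y$- and $\mbf z$-updates, exactly as in (\ref{ksum1})--(\ref{ksum2}), yields $\mbbe\bem\|\mbf x^k-\wh{\mbf x}^k\|_2^2\eem=\frac{1}{\|\mbf A\|_\rmf^2}\mbbe\bem\|\mbf z^k-\mbf z_\star^0\|_2^2\eem$ and $\mbbe\bem\|\wh{\mbf x}^k-\mbf x_\star^0\|_2^2\eem\leq\rho\,\mbbe\bem\|\mbf x^{k-1}-\mbf x_\star^0\|_2^2\eem$. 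The latter again requires the induction $\mbf x^{k-1}-\mbf x_\star^0\in\ran(\mbf A^\top)$ so that (\ref{lem}) applies; this is the same induction as in Theorem~\ref{rk2} and is unaffected by the extra $\mbf y$-iterate. Combining the two gives the one-step recursion $\mbbe\bem\|\mbf x^k-\mbf x_\star^0\|_2^2\eem\leq\frac{1}{\|\mbf A\|_\rmf^2}\mbbe\bem\|\mbf z^k-\mbf z_\star^0\|_2^2\eem+\rho\,\mbbe\bem\|\mbf x^{k-1}-\mbf x_\star^0\|_2^2\eem$.

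The genuinely new step is to insert (\ref{thm1}) into this recursion and unroll it. Writing $\alpha=\|\mbf y^0-\mbf y_\star^0\|_2^2/\|\mbf A\|_\rmf^4$ and $\beta=\|\mbf z^0-\mbf z_\star^0\|_2^2/\|\mbf A\|_\rmf^2$, the bound (\ref{thm1}) turns the recursion into $\mbbe\bem\|\mbf x^k-\mbf x_\star^0\|_2^2\eem\leq k\rho^k\alpha+\rho^k\beta+\rho\,\mbbe\bem\|\mbf x^{k-1}-\mbf x_\star^0\|_2^2\eem$. Unrolling, the inhomogeneous term at level $j$ is $a_j=j\rho^j\alpha+\rho^j\beta$, and the key observation is that $\rho^{k-j}a_j=j\rho^k\alpha+\rho^k\beta$ is flat in $j$ apart from the factor $j$; summing over $j=1,\dots,k$ therefore produces $\big(\sum_{j=1}^k j\big)\rho^k\alpha+k\rho^k\beta=\frac{k(k+1)}{2}\rho^k\alpha+k\rho^k\beta$, together with the homogeneous term $\rho^k\|\mbf x^0-\mbf x_\star^0\|_2^2$. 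Substituting back the definitions of $\alpha$ and $\beta$ recovers the stated bound. The main (and essentially only) obstacle is the bookkeeping: one must track the conditioning across the three nested Kaczmarz sweeps correctly, and recognize that because the $\mbf z$-error already grows linearly in $k$, the geometric unrolling promotes it to the triangular growth $k(k+1)/2$.

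Finally, that $\mbf x_\star^0$ is a least squares solution of (\ref{pne}) is routine: using $\mbf A^\top\mbf A\mbf A^\dag=\mbf A^\top$ and the fact that $\mbf A^\top\mbf A(\mbf A^\top\mbf A)^\dag$ is the orthogonal projector onto $\ran(\mbf A^\top)$, one computes $\mbf A^\top\mbf A\mbf x_\star^0=\mbf A^\top\mbf b-\mbf A^\top\mbf A(\mbf A^\top\mbf A)^\dag\mbf c$; since $\mbf A^\top\mbf b\in\ran(\mbf A^\top)$, this is exactly the orthogonal projection of $\mbf A^\top\mbf b-\mbf c$ onto $\ran(\mbf A^\top)=\ran(\mbf A^\top\mbf A)$, i.e. the right-hand side that every least squares solution must reproduce.
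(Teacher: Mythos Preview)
Your proposal is correct and follows essentially the same approach as the paper: reuse the auxiliary iterate $\wh{\mbf x}^k$ from (\ref{xhat}), observe that (\ref{eq1})--(\ref{ksum}) and (\ref{ksum2}) carry over unchanged, replace the bound on $\mbbe\bem\|\mbf z^k-\mbf z_\star^0\|_2^2\eem$ by (\ref{thm1}), and unroll the resulting recursion so that the linear-in-$k$ factor from (\ref{thm1}) becomes the triangular factor $k(k+1)/2$. Your $\alpha,\beta$ shorthand and explicit summation $\sum_{j=1}^k j$ make the unrolling slightly more transparent than the paper's telescoped display, but the argument is the same.
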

\begin{proof} Let $\wh{\mbf x}^k$ be the vector given in (\ref{xhat}). We note that the equalities (\ref{eq1})--(\ref{ksum}), and the inequality (\ref{ksum2}) in the proof of Theorem \ref{rk2} still hold. By (\ref{thm1}), the estimate (\ref{ksum1}) becomes \beq\label{ksum3} \mbbe\bem\|\mbf x^k-\wh{\mbf x}^k\|_2^2\eem= \frac{1}{\|\mbf A\|_\rmf^2}\mbbe\bem\|\mbf z^k-\mbf z_\star^0\|_2^2\eem\leq\frac{k\rho^k}{\|\mbf A\|_\rmf^4}\|\mbf y^0-\mbf y_\star^0\|_2^2+\frac{\rho^k}{\|\mbf A\|_\rmf^2} \|\mbf z^0-\mbf z_\star^0\|_2^2.\eeq  
Combining (\ref{ksum}), (\ref{ksum2}), and (\ref{ksum3}) yields \begin{align*}\mbbe\bem\|{\mbf x}^k-\mbf x_\star^0\|_2^2\eem &=\mbbe\bem\|{\mbf x}^k-\wh{\mbf x}^k\|_2^2\eem+\mbbe\bem\|\wh{\mbf x}^k-\mbf x_\star^0\|_2^2\eem\\&\leq \frac{k\rho^k}{\|\mbf A\|_\rmf^4}\|\mbf y^0-\mbf y_\star^0\|_2^2+\frac{\rho^k}{\|\mbf A\|_\rmf^2} \|\mbf z^0-\mbf z_\star^0\|_2^2+\rho\mbbe\bem\|{\bf x}^{k-1}-\mbf x_\star^0\|_2^2\eem\\&\leq\frac{k\rho^k}{\|\mbf A\|_\rmf^4}\|\mbf y^0-\mbf y_\star^0\|_2^2+\frac{(k-1)\rho^k}{\|\mbf A\|_\rmf^4}\|\mbf y^0-\mbf y_\star^0\|_2^2\\ &\quad+\frac{2\rho^k}{\|\mbf A\|_\rmf^2} \|\mbf z^0-\mbf z_\star^0\|_2^2+\rho^2\mbbe\bem\|{\bf x}^{k-2}-\mbf x_\star^0\|_2^2\eem
\\&\leq
\cdots
\\&\leq\frac{k(k+1)\rho^k}{2\|\mbf A\|_\rmf^4}\|\mbf y^0-\mbf y_\star^0\|_2^2+\frac{k\rho^k}{\|\mbf A\|_\rmf^2}\|\mbf z^0-\mbf z_\star^0\|_2^2+\rho^k\|{\bf x}^0-\mbf x_\star^0\|_2^2.
\end{align*} It is trivial to verify that $\mbf x_\star^0$ is a least squares solution of (\ref{pne}). Then we  complete the proof.\end{proof}
\begin{remark}
If $\sigma_{\rm max}(\mbf A)=\sigma_{\rm min}(\mbf A)$, then the inequalities $(\ref{lem})$ and $(\ref{thm1})$ become equalities. This yields that all the inequalities in the proof of Theorem $\ref{rk3}$ become equalities. Therefore, the convergence bound in Theorem $\ref{rk3}$ is tight.	
\end{remark}

\section{Numerical results}
In this section, we report the numerical results of the RDK algorithm and the RTK algorithm for solving (\ref{pne}). The purpose is to illustrate our theoretical results (Theorems \ref{rk2} and \ref{rk3}) via simple examples. All experiments are performed using MATLAB on a laptop with 2.7-GHz Intel Core i7 processor, 16-GB memory, and Mac operating system. 

The matrix $\mbf A$ and the vectors $\mbf b$ and $\mbf c$ in (\ref{pne}) are generated by using the MATLAB functions {\tt diag}, {\tt null}, {\tt ones}, {\tt qr}, {\tt rand}, and {\tt randn} as follows. Given $m$, $n$, $r = \rank(\mbf A)$, and $\kappa\geq 1$, we construct the matrix $\mbf A$ by $\bf A = UDV^\top$, where $\mbf U\in\mbbr^{m\times r}$, $\mbf D\in\mbbr^{r\times r}$ and $\mbf V\in\mbbr^{n\times r}$ are given by {\tt [U,$\sim$]=qr(randn(m,r),0)}, {\tt D=diag(ones(r,1)+($\kappa$-1)*rand(r,1))} and {\tt [V,$\sim$]=qr(randn(n,r),0)}. So the condition number of $\mbf A$, which is defined as $\sigma_{\rm max}(\mbf A)/\sigma_{\rm min}(\mbf A)$, is upper bounded by $\kappa$. The vector $\mbf b$ is taken to be {\tt b=randn(m,1)}. For the case $\mbf c\in\ran(\mbf A^\top)$, the vector $\mbf c$ is constructed by {\tt c=A'*randn(m,1)}. For the case $\mbf c\notin\ran(\mbf A^\top)$, the vector $\mbf c$ is constructed by {\tt c=randn(n,1)+null(A)*randn(n-r,1)}.

In all experiments we use $\mbf y^0=\mbf c$, $\mbf z^0=\mbf b$, and $\mbf x^0 = \mbf 0$. In Figures \ref{fig1} and \ref{fig2} we plot the error $\|\mbf x^k-\mbf A^\dag\mbf b+(\mbf A^\top\mbf A)^\dag \mbf c\|_2^2$ (average of 50 independent trials) of RDK and RTK. For all cases, RDK and REK converge. In particular, for $\kappa=1$, which means all nonzero singular values of $\mbf A$ are the same, the convergence bounds in Theorems \ref{rk2} and \ref{rk3} are attained (see Figure \ref{fig1}). All these experimental results support the theoretical findings presented in Theorems \ref{rk2} and \ref{rk3}.

\begin{figure}[htb]
\centerline{\epsfig{figure=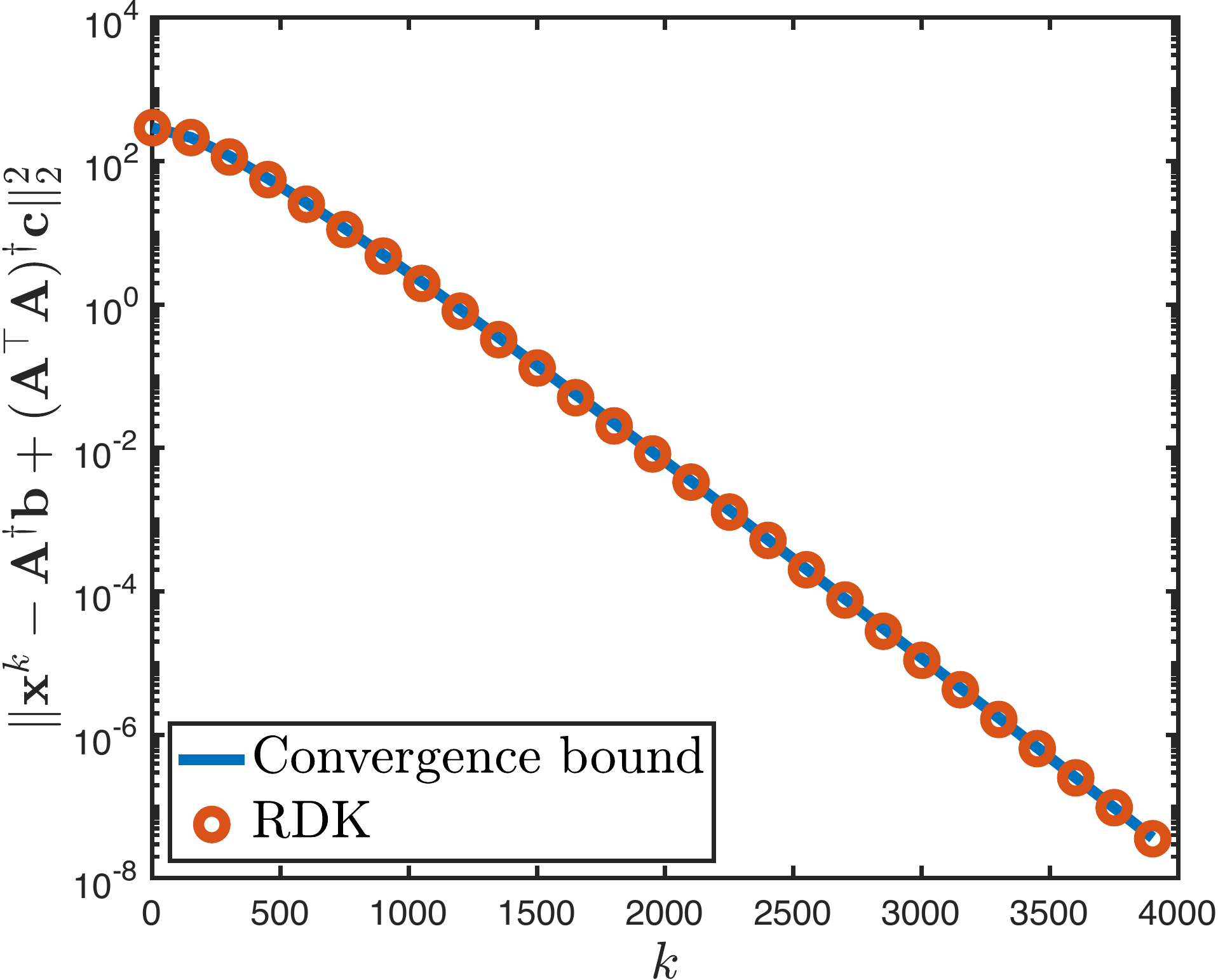,height=2.4in}\quad\epsfig{figure=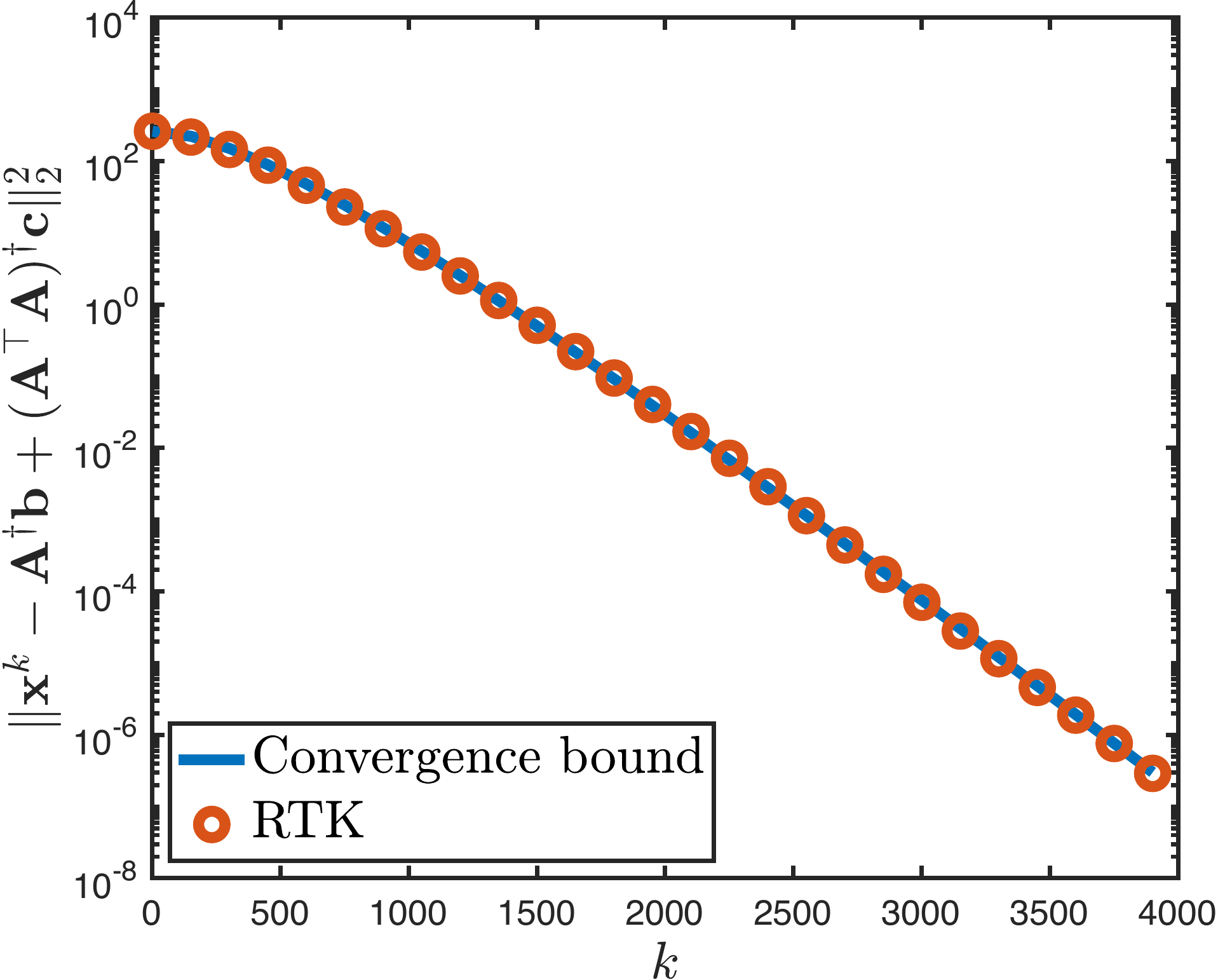,height=2.4in}}
\caption{The error $\|{\bf x}^k-{\bf A}^\dag{\bf b}+{(\bf A^\top A)^\dag{\bf c}}\|_2^2$ (average of 50 independent trials) for $m=500$, $n=250$, $r=150$, and $\kappa=1$. Left: RDK for the case $\mbf c\in\ran(\mbf A^\top)$. Right: RTK for the case $\mbf c\notin\ran(\mbf A^\top)$.}
\label{fig1}
\end{figure}
\begin{figure}[htb]
\centerline{\epsfig{figure=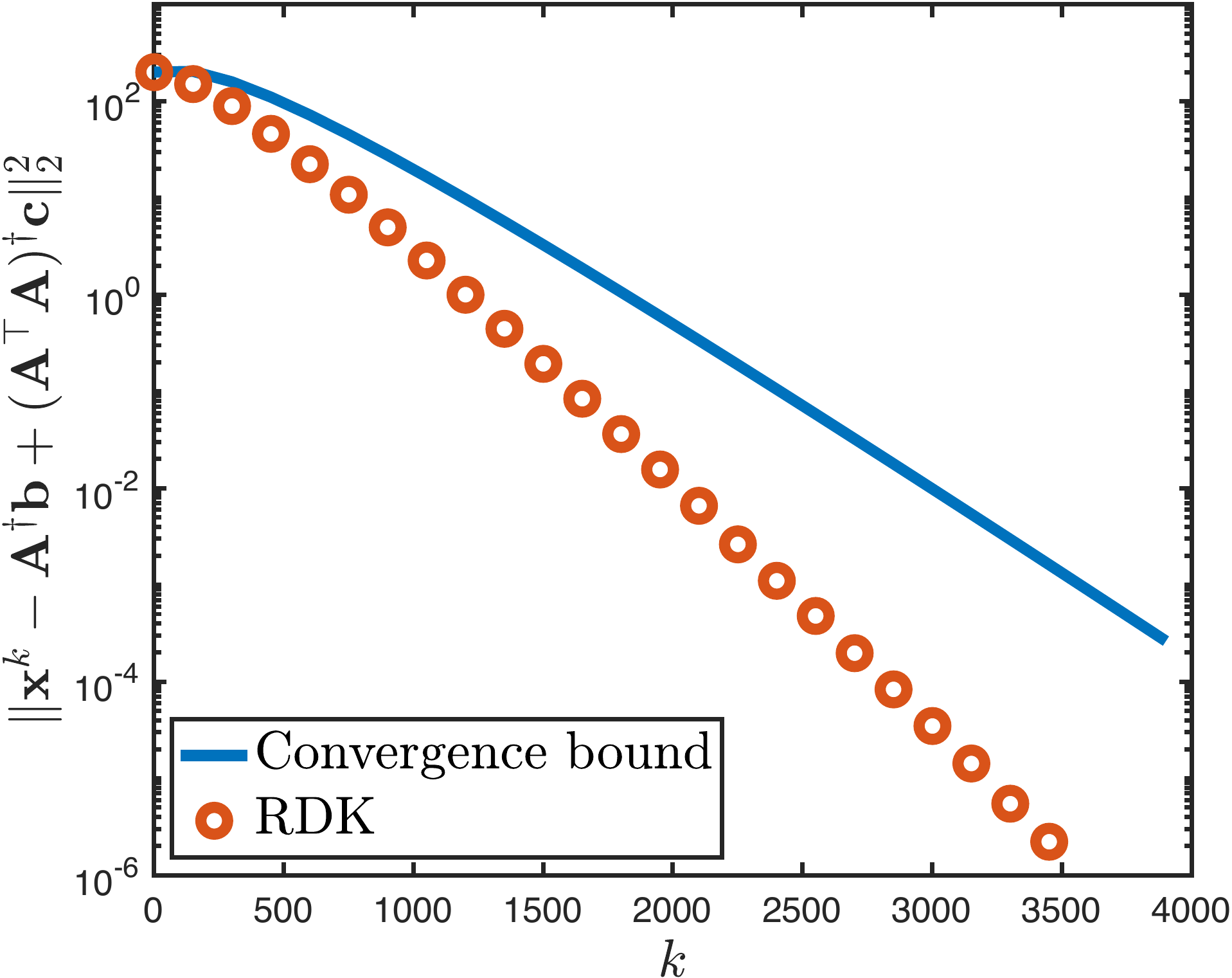,height=2.4in}\quad\epsfig{figure=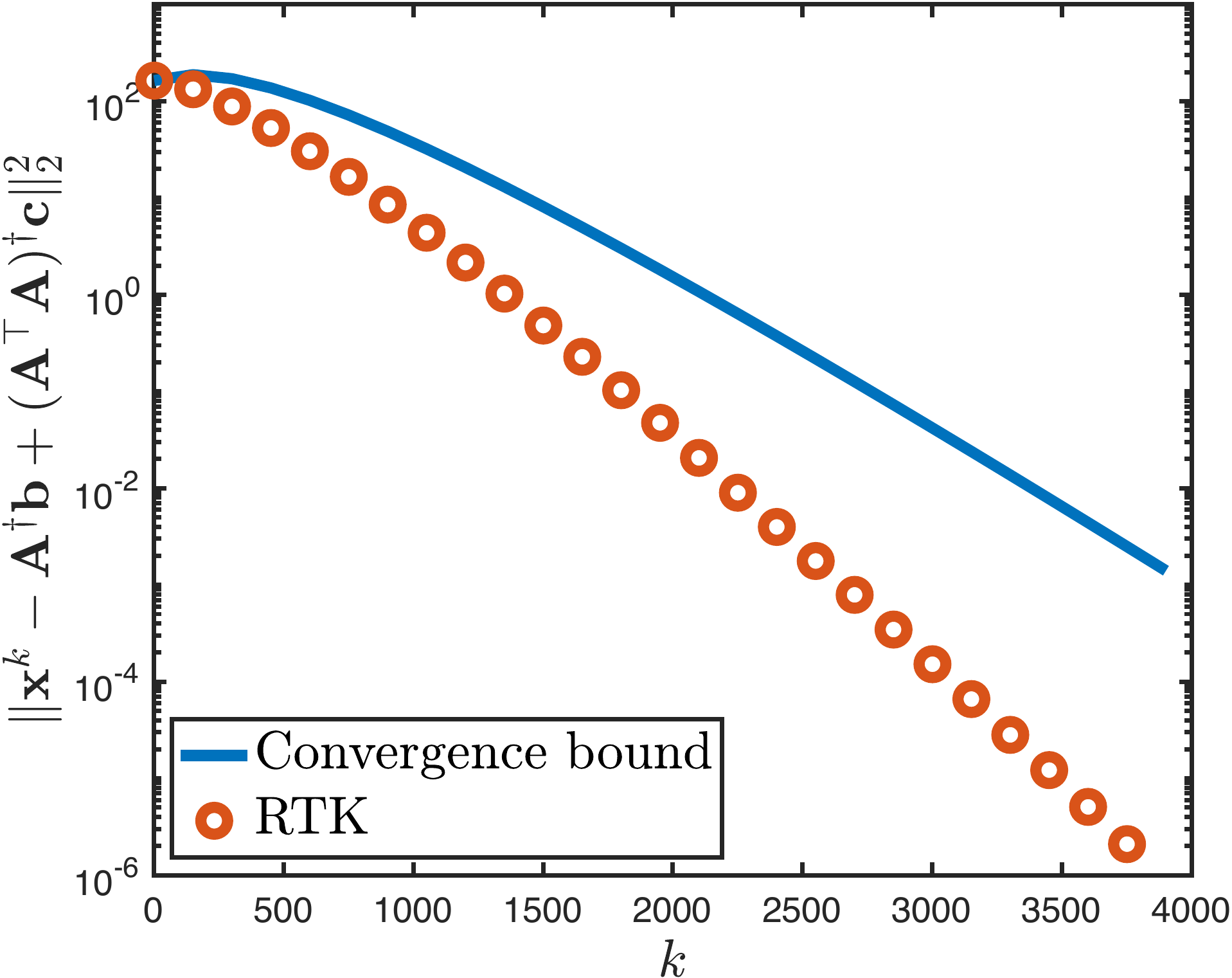,height=2.4in}}
\caption{The error $\|{\bf x}^k-{\bf A}^\dag{\bf b}+{(\bf A^\top A)^\dag{\bf c}}\|_2^2$ (average of 50 independent trials) for $m=500$, $n=250$, $r=150$, and $\kappa=1.5$. Left: RDK for the case $\mbf c\in\ran(\mbf A^\top)$. Right: RTK for the case $\mbf c\notin\ran(\mbf A^\top)$.}
\label{fig2}
\end{figure}

\section{Concluding remarks}
In this work, we propose randomized iterative algorithms that solve the extended normal equations. We prove that the RDK algorithm exponentially converges to a solution of the extended normal equations for the  consistent case and prove that the RTK algorithm exponentially converges to a least squares solution of the extended normal equations for the inconsistent case. Our convergence analysis applies to arbitrary matrix $\mbf A$ and the convergence upper bounds are attained for the case that all nonzero singular values of $\mbf A$ are the same. Numerical experiments confirm the theoretical results. 

We remark that for the scenarios where $\mbf A$ is so large that it does not fit in computer memory, iterative methods based on {\it full} matrix-vector multiplications (e.g., Krylov subspace methods) are inefficient because the entire matrix $\bf A$ must be accessed in each step (which leads huge communication costs). If memory is a concern, the proposed RDK and RTK algorithms are appropriate alternatives because at each step only a sample of rows and columns are required.

\section*{Acknowledgments}
This work was funded by the National Natural Science Foundation of China (No.11771364) and the Fundamental Research Funds for the Central Universities (No.20720180008).

\end{document}